\providecommand{\U}[1]{\protect\rule{.1in}{.1in}}
\newtheorem{theorem}{Teorema}[section]
\newtheorem{lemma}[theorem]{Lemma}
\newtheorem{proposition}[theorem]{Proposition}
\newtheorem{remark}[theorem]{Remark}
\newenvironment{proof}[1][Proof]{\textbf{#1.} }{\ \rule{0.5em}{0.5em}}
\begin{document}

\title{On the Dirichlet problem for the CMC graph equation on multiply connected
domains of a Riemannian manifold}
\author{A. J. Aiolfi, G. S. Nunes, L. O. Sauer, R. B. Soares}
\date{}
\maketitle

\begin{abstract}
We establish existence and uniqueness of compact graphs of cons\nolinebreak
tant mean curvature in $M\times\mathbb{R}$ over bounded multiply connected
domains of\emph{ }$M\times\left\{  0\right\}  $ with boundary lying in two
parallel horizontal slices of $M\times\mathbb{R}.$

\end{abstract}

\section{Introduction}

\qquad Let $M$ be a complete $n-$dimensional Riemannian manifold, $n\geq2$.
Let $\Lambda$ and $\Lambda_{i},$ $i=1,...,m$, be bounded, simply connected
domains of class $C^{2,\alpha}$ of $M$ such that $\overline{\Lambda}%
_{i}\subset\Lambda$ and $\overline{\Lambda}_{i}\cap\overline{\Lambda}%
_{j}=\varnothing$ if $i\neq j$. Consider the multiply connected domain
$\Omega=\Lambda\backslash\left(  \cup_{i=1}^{m}\overline{\Lambda}_{i}\right)
$ and set $\Gamma:=\partial\Lambda$, $\Gamma_{i}:=\partial\Lambda_{i}$. With
this notation\emph{ }%
\[
\partial\Omega=\Gamma\cup\left(  \cup_{i=1}^{m}\Gamma_{i}\right)  \text{.}%
\]
Let $h,H\geq0$ be given. In this paper\ we investigate the Dirichlet problem%
\begin{equation}
\left\{
\begin{array}
[c]{l}%
Q_{H}\left(  u\right)  {\small :=\operatorname{div}}\frac{\nabla u}%
{\sqrt{1+\left\vert \nabla u\right\vert ^{2}}}+nH{\small =0}\text{ in }%
\Omega,\text{ }u\in C^{2,\alpha}\left(  \overline{\Omega}\right) \\
u|_{\Gamma}=0,\text{ }u|_{\Gamma_{i}}=h\text{, }i=1,...,m
\end{array}
\right.  \label{dirH}%
\end{equation}
where $\operatorname{div}$ and $\nabla$ are the divergence and the gradient in
$M$. If $u$ is a solution of (\ref{dirH}) then the graph of $u$ is a compact
constant mean curvature $H$ hypersurface of $M\times\mathbb{R}$ oriented by a
unit normal vector $N$ such that $\left\langle N,d/dt\right\rangle \leq0$ and
whose boundary lies in the slices $M\times\left\{  0\right\}  \cup
M\times\left\{  h\right\}  $.

The Dirichlet problem (\ref{dirH}) was studied in the work \cite{ER} for
$M=\mathbb{R}^{2}$, in \cite{S} when $M=\mathbb{H}^{2}$ and $\partial\Omega$
has only two connected components and in \cite{B} when $M=\mathbb{H}^{n}$ and
$H=0$. In these works existence results are obtained when the height $h$ is
less than or equal to a constant which depends on the mean curvature of
$\partial\Omega$, the distance between the connected component of
$\partial\Omega$, the dimension $n$, and on the diameter of $\Omega$ and $H$
if $H>0$. In \cite{ER} some nonexistence results also are established. In the
case $H=0$ we observe that Theorem 1 of \cite{ARS}, an extension of the
classical Jenkins-Serrin result - Theorem 2 of \cite{JS} -, gives us an
existence result with the upper bound of $h$ depending on $n$, on the mean
curvature and on the injectivity radius of $\partial\Omega$.

The main motivation to study the problem (\ref{dirH}) is that, for $H>0$, we
did not find in the literature, even for $M=\mathbb{R}^{n}$, a result where
the hypersurface $\Gamma$ is not assumed to be mean convex. We explore this
situation when $M$ is a Hadamard manifold (Theorem \ref{H+}). We observe that
in \cite{DR} the authors conclude, for $M=\mathbb{R}^{n}$, the existence and
uniqueness of $H$-graphs for a large class of prescribed boundary data over
$\Gamma$ where $\Gamma$ is not necessarily mean convex but, however,
$\Gamma=\partial\Omega$ with $\Omega$ simply connected.

Relatively to the minimal case, we obtain Theorem \ref{minimal}, whose
estimate on $h$ is more in line with that in Theorem 2.1 of \cite{ER} than
that in Theorem 1 of \cite{ARS}. Despite being difficult to say if our result
improves or not the estimate of $h$ given in \cite{ARS} in general, for some
domains $\Omega$ we got some gain (see Remark \ref{R1}).

An extra motivation to our work is the problem proposed by A. Ros and H.
Rosenberg in Remark 4 of \cite{RR}: Given two convex Jordan curves in distinct
parallel planes of $\mathbb{R}^{3}$, is there a CMC annulus having such curves
as boundary? Besides of the aforementioned works, this situation was also
investigated in \cite{FR}, \cite{AFR} and \cite{AF} (for some characterization
results, see \cite{Sh} and \cite{MW}). The results obtained so far do not give
a complete answer to this problem. Our results give some contribution in the
$M\times\mathbb{R}$ context.

We fix some notations: the mean curvature of $\partial\Omega$ with respect to
the unit normal vector field $\eta$ to $\partial\Omega$ pointing to $\Omega$
will be denoted by $H^{\partial\Omega}$ and
\[
H_{\inf}^{\partial\Omega}:=\inf_{\partial\Omega}H^{\partial\Omega}.
\]

Let $d$ be the Riemannian distance in $M$. Denote by $R_{\Gamma},R_{\Gamma
_{i}}$, the biggest positive numbers such that the exponential maps
\begin{equation}
\exp_{\Gamma}:\Gamma\times\lbrack0,R_{\Gamma})\longrightarrow U_{\Gamma
}:=\left\{  z\in\Omega;\text{ }d(z,\Gamma)<R_{\Gamma}\right\}  \subset
\Omega\label{ugama}%
\end{equation}
and
\begin{equation}
\exp_{\Gamma_{i}}:\Gamma_{i}\times\lbrack0,R_{\Gamma_{i}})\longrightarrow
U_{\Gamma_{i}}:=\left\{  z\in\Lambda\setminus\Lambda_{i};\text{ }%
d(z,\Gamma_{i})<R_{\Gamma_{i}}\right\}  \subset\Lambda\backslash\Lambda_{i}
\label{Ugamai}%
\end{equation}
are diffeomorphisms (here, $\exp\left(  p,s\right)  :=\exp_{p}\left(
s\eta\left(  p\right)  \right)  $) and set
\begin{equation}
R=\min\left\{  R_{\Gamma},R_{\Gamma_{1}},R_{\Gamma_{2}},...,R_{\Gamma_{m}%
}\right\}  . \label{r}%
\end{equation}

\begin{center}
\begin{figure}[tbh]
\centering
\includegraphics[scale=0.4]{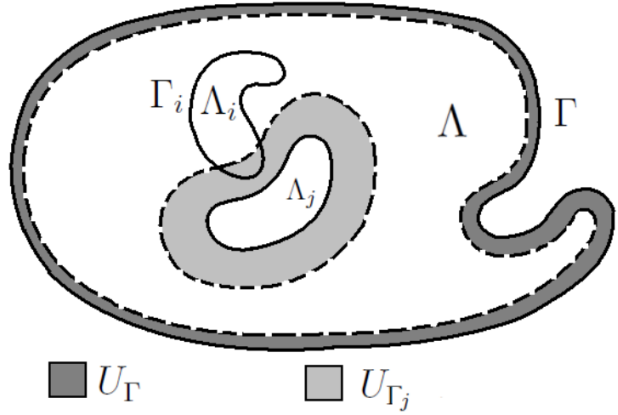}\caption{Examples of $U_{\Gamma
}$ and $U_{\Gamma_{j}}$ domains.}%
\end{figure}
\end{center}

Denote by $\delta$ ($\approx1.8102$) the solution of the equation
\[
x=\cosh\left(  \frac{x}{\sqrt{x^{2}-1}}\right)  \text{, }x>1\text{.}%
\]

We prove:

\begin{theorem}
\label{minimal}Let $M$ be a complete $n-$dimensional Riemannian manifold,
$n\geq2$. If%
\begin{equation}
h<\frac{1}{\delta\left(  n-1\right)  \left\vert H_{\inf}^{\partial\Omega
}\right\vert }\cosh^{-1}\left(  1+\tau\delta\left(  n-1\right)  \left\vert
H_{\inf}^{\partial\Omega}\right\vert \right)  \label{hminimal}%
\end{equation}
where
\[
\tau=\min\left\{  R,\frac{\delta-1}{\delta\left(  n-1\right)  \left\vert
H_{\inf}^{\partial\Omega}\right\vert }\right\}  ,
\]
then the Dirichlet problem (\ref{dirH}) has a unique solution for $H=0$.
\end{theorem}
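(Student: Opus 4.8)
The plan is to solve (\ref{dirH}) for $H=0$ by the method of continuity, reducing existence to uniform a priori $C^{1,\beta}(\overline{\Omega})$ estimates and handling uniqueness separately by the comparison principle. For uniqueness, if $u_{1},u_{2}$ both solve the problem, then since $Q_{0}$ is a divergence-form elliptic operator and $u_{1}=u_{2}$ on $\partial\Omega$, the maximum principle applied to $u_{1}-u_{2}$ forces $u_{1}\equiv u_{2}$. For existence I would embed (\ref{dirH}) in the family $Q_{0}(u)=0$, $u|_{\Gamma}=0$, $u|_{\Gamma_{i}}=\sigma h$ for $\sigma\in[0,1]$, which is solved by $u\equiv 0$ at $\sigma=0$. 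Openness of the set of solvable $\sigma$ follows from the implicit function theorem, the linearized minimal surface operator being elliptic with trivial kernel; closedness follows from uniform a priori estimates together with Schauder theory and Arzel\`a--Ascoli. The height hypothesis (\ref{hminimal}) enters only through these estimates, and since every $u_{\sigma}$ satisfies $0\le u_{\sigma}\le\sigma h\le h$, they hold uniformly in $\sigma$.

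The a priori estimates split into three pieces. The $C^{0}$ bound is immediate: the constants $0$ and $\sigma h$ are respectively a sub- and a supersolution, so $0\le u_{\sigma}\le\sigma h$ by comparison. The interior gradient estimate is the classical one for the minimal surface equation; as $\overline{\Omega}$ is compact the ambient geometry is bounded there, and the estimate depends only on the $C^{0}$ bound. The decisive piece is the boundary gradient estimate, which is where (\ref{hminimal}) and the constant $\delta$ are used.

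For the boundary gradient estimate I would work in the tubular neighborhoods $U_{\Gamma}$ and $U_{\Gamma_{i}}$ of (\ref{ugama}) and (\ref{Ugamai}), where $t(z):=d(z,\partial\Omega)$ is smooth because $\exp$ is a diffeomorphism up to distance $R$. Seeking barriers $w=\psi(t)$, a direct computation gives
\[
Q_{0}(\psi(t))=\phi'(t)+\phi(t)\,\Delta t,\qquad \phi:=\frac{\psi'}{\sqrt{1+\psi'^{2}}},
\]
so that making $w$ a super- or subsolution reduces to the inequality $\phi'+\phi\,\Delta t\le 0$ with the endpoint conditions $\psi(0)=0$ and $\psi(\tau)\ge h$ (which, via comparison on the sub-tube whose boundary is $\partial\Omega\cup\{t=\tau\}$, yield the one-sided normal-derivative bound on $\partial\Omega$). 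Here $\Delta t$ is the mean curvature (times $n-1$) of the equidistant hypersurface through $z$; by the Riccati comparison along the normal geodesics, starting from $(n-1)H^{\partial\Omega}$ on $\partial\Omega$ and remaining valid while $t<R$, it is controlled in terms of $(n-1)\lvert H_{\inf}^{\partial\Omega}\rvert$ in the extremal space-form case. Confining the barrier to the tube of width $\tau=\min\{R,(\delta-1)/(\delta(n-1)\lvert H_{\inf}^{\partial\Omega}\rvert)\}$ keeps this comparison in force before the equidistant hypersurfaces would focus. Solving the resulting ODE and optimizing the free parameter to maximize the admissible rise $\psi(\tau)$ produces a catenoid-type profile whose maximal rise over $[0,\tau]$ matches the right-hand side of (\ref{hminimal}), the transcendental constant $\delta$ being exactly the optimizer in this maximization.

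The main obstacle I expect is this last step: pinning down the sharp barrier profile and extracting $\delta$. It requires simultaneously (i) a quantitative bound on $\Delta t$ throughout the tube depending only on $H_{\inf}^{\partial\Omega}$, $n$ and $R$, so that the final estimate is free of finer ambient data, and (ii) the variational optimization of the barrier ODE that produces the $\cosh^{-1}$ and the equation $x=\cosh(x/\sqrt{x^{2}-1})$ defining $\delta$. Once the barriers exist and give the boundary gradient bound, the uniform $C^{1,\beta}$ estimate follows, Schauder theory upgrades it to $C^{2,\alpha}$, the continuity argument closes at $\sigma=1$, and the resulting $u$ is the unique solution of (\ref{dirH}).
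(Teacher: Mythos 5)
Your proposal follows essentially the same route as the paper: a continuity method in the boundary data $t\varphi$, uniqueness and the $C^{0}$ bound by comparison, interior gradient estimates from known theory (the paper cites Section 5 of \cite{DHL}), and boundary gradient estimates from radial barriers $\psi(d)$ in the tubes $U_{\Gamma}$, $U_{\Gamma_{i}}$, with $\Delta d$ controlled by Ricci comparison (Lemma \ref{LcL}) and the profile optimized to produce the $\cosh^{-1}$ and the constant $\delta$. The one step you flag as an obstacle but do not carry out --- taking $\psi(s)=a\cosh^{-1}(\alpha+bs)-a\cosh^{-1}(\alpha)$ with $ab=1$, verifying the supersolution inequality on the tube of width $\varepsilon$, and maximizing $\cosh^{-1}(\lambda)/\lambda$ to obtain $\delta$ --- is exactly the content of the paper's Proposition \ref{Propos} and Lemma \ref{cor_pro}, and your outline of it is consistent with what the paper does.
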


\bigskip

\begin{theorem}
\label{H+} Assume that $M$ is a Hadamard manifold (complete, simply connected
Riemannian manifold with nonpositive sectional curvature) and that $\Omega$ is
contained in a geodesic ball of radius $\Re$ of $M$. Then there is a positive
constant $C=C\left(  n,H_{\inf}^{\partial\Omega},\Re\right)  $, explicitly
given in (\ref{tc}) such that, given $H\in\left[  0,C\right)  $ we have%
\[
h_{H}:=\frac{\cosh^{-1}(1+\delta\left[  \left(  n-1\right)  \left\vert
H_{\inf}^{\partial\Omega}\right\vert +nH\left(  1+\delta\right)  \right]
\sigma)}{\delta\left[  \left(  n-1\right)  \left\vert H_{\inf}^{\partial
\Omega}\right\vert +nH\left(  1+\delta\right)  \right]  }-\frac{H\Re^{2}%
}{1+\sqrt{1-H^{2}\Re^{2}}}>0,
\]
where
\[
\sigma=\min\left\{  R,\frac{\Re\left(  \delta-1\right)  }{\delta\left[
\Re\left(  n-1\right)  \left\vert H_{\inf}^{\partial\Omega}\right\vert
+n\left(  \delta+1\right)  \right]  }\right\}  ,
\]
and the Dirichlet Problem (\ref{dirH}) has a unique solution if $h\leq h_{H}$.
\end{theorem}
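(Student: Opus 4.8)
The plan is to solve (\ref{dirH}) by the continuity method, so that existence is reduced to a priori $C^{2,\alpha}(\overline{\Omega})$ bounds on solutions, while uniqueness follows at once from the comparison principle for the quasilinear elliptic operator $Q_H$. Concretely, I would embed the problem in the one-parameter family $Q_{tH}(u_t)=0$ with $u_t|_\Gamma=0$, $u_t|_{\Gamma_i}=th$, $t\in[0,1]$, whose $t=0$ member is solved by $u_0\equiv0$, and show that the set of $t$ for which a solution exists is nonempty, open, and closed. Openness is the standard consequence of the implicit function theorem, since the linearization of $Q_H$ at a $C^{2,\alpha}$ solution is a uniformly elliptic operator with H\"older coefficients, hence an isomorphism between the relevant Schauder spaces. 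Closedness is exactly the content of the a priori estimates, which is where the bound $h\le h_H$ enters, and these must be established uniformly in $t$.

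The a priori estimates split into three pieces. For the $C^0$ estimate I would use the hypothesis that $\Omega$ lies in a geodesic ball of radius $\Re$ and compare $u$ with vertical translates of spherical caps, that is, pieces of CMC-$H$ model hypersurfaces of $M\times\mathbb{R}$ over that ball. Since $M$ is Hadamard, Hessian comparison controls the geometry of the geodesic spheres and turns these caps into genuine barriers; their height over a ball of radius $\Re$ is precisely $H\Re^2/(1+\sqrt{1-H^2\Re^2})$, which is the source of the subtracted term in $h_H$ and forces the restriction $H\Re<1$ built into the constant $C$ of (\ref{tc}). This bounds $\sup_\Omega|u|$ in terms of $h$ and this cap height. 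For the boundary gradient estimate I would build, in each Fermi collar $U_\Gamma$ and $U_{\Gamma_i}$ of width $R$, barriers of the form $w=\phi(d(\cdot,\partial\Omega))$. Writing out the operator on such a profile gives
\[
Q_H(\phi(d))=\frac{\phi''}{(1+\phi'^2)^{3/2}}+\frac{\phi'}{\sqrt{1+\phi'^2}}\,\Delta d+nH,
\]
and nonpositive sectional curvature together with the definition of $H_{\inf}^{\partial\Omega}$ bounds $\Delta d$ by the mean curvature of the level hypersurfaces of $d$. Choosing $\phi$ to solve the resulting ordinary differential inequality turns $w$ into a supersolution matching the prescribed data; optimizing the profile produces the factor $\delta$ and the term $\cosh^{-1}(1+\delta[(n-1)|H_{\inf}^{\partial\Omega}|+nH(1+\delta)]\sigma)$, with $\sigma$ the admissible collar width. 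The requirement that this barrier descend from the boundary value $h$ within the collar is exactly the inequality $h\le h_H$.

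Once $\sup_\Omega|u|$ and $\sup_{\partial\Omega}|\nabla u|$ are controlled, I would invoke the interior gradient estimate for CMC graphs over domains of a Riemannian manifold to upgrade to a global bound $\sup_\Omega|\nabla u|\le K$ depending only on the previous data. The equation $Q_H(u)=0$ then becomes uniformly elliptic with H\"older-continuous coefficients, so Schauder estimates yield the required $C^{2,\alpha}(\overline{\Omega})$ bound, closing the continuity argument; uniqueness then comes from the comparison principle. I expect the main obstacle to be the boundary gradient estimate at the inner components $\Gamma_i$, where the barrier must fall from the large value $h$ while the term $nH$ works against the bending supplied by $\phi''$, so the construction succeeds only when $h$ does not exceed $h_H$. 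Verifying that the profile $\phi$ fits inside the collar of width $\sigma$ and simultaneously dominates the $nH$ contribution is the delicate point, and it is precisely the balance recorded in the formula for $h_H$.
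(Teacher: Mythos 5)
Your proposal follows essentially the same route as the paper: the continuity method along the family $Q_{tH}$, the $C^{0}$ bound from spherical-cap comparison over the ball of radius $\Re$ (the paper's Lemma \ref{height}), the boundary gradient estimate from radial profiles $\psi(d)$ in the Fermi collars optimized to produce the constant $\delta$ (Proposition \ref{Propos} and Lemma \ref{cor_pro}), the interior gradient estimate of \cite{DHL}, and Schauder theory to close the argument, with uniqueness from the comparison principle. The only pieces you leave implicit are the algebraic verification that $H<C$, with $C$ as in (\ref{tc}), actually forces $h_{H}>0$, and the construction of the lower barriers at the inner components $\Gamma_{i}$ (the paper uses downward vertical translates of the minimal graph supplied by Theorem \ref{minimal}); both are routine to fill in.
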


\section{Barriers for the Dirichlet problem (\ref{dirH})}

\qquad We shall use the continuity method from Elliptic PDE theory in the
proof of the main results. Then, we need to construct local barriers
relatively to the Dirichlet problem (\ref{dirH}) (see \cite{GT}, p. 333).

We work with the construction of the local barriers relatively to the points
in $\Gamma$ and in $\Gamma_{i}$ at same time, using $U$ to means both
$U_{\Gamma}$ and $U_{\Gamma_{i}}$ and having in mind that, for $z\in U$,
$d\left(  z\right)  =d\left(  z,\Gamma\right)  $ if $U=U_{\Gamma}$ and
$d\left(  z\right)  =d\left(  z,\Gamma_{i}\right)  $ if $U=U_{\Gamma_{i}}$.

We consider, at $z\in U$, an orthonormal referential frame $\left\{
E_{j}\right\}  $\emph{\ }of\emph{\ }$T_{z}M,$\emph{\ }$j=1,...,n,$
where\emph{\ }%

\begin{equation}
E_{n}:=\nabla d. \label{di}%
\end{equation}

\begin{lemma}
\label{Lcg}Given $\psi\in C^{2}\left(  \left[  0,\infty\right)  \right)  $,
set $s=d\left(  z\right)  $, $z\in\overline{U},$ and consider $w\in
C^{2}\left(  \overline{U}\right)  $ given by
\begin{equation}
w\left(  z\right)  =\psi\left(  s\right)  \label{w}%
\end{equation}
Suppose $H\geq0$. Then $Q_{H}\left(  w\right)  \leq0$ in $U$\emph{ }if%
\begin{equation}
\psi^{\prime\prime}+\left(  \psi^{\prime}+\left[  \psi^{\prime}\right]
^{3}\right)  \Delta d+nH\left(  1+\left[  \psi^{\prime}\right]  ^{2}\right)
^{3/2}\leq0, \label{eg}%
\end{equation}
where $\Delta$ is the Laplacian in $M$.
\end{lemma}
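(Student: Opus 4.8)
The plan is to compute $Q_{H}(w)$ explicitly and show that, after clearing a single positive denominator, the inequality $Q_{H}(w)\le 0$ becomes exactly (\ref{eg}). First I would compute the gradient. Since $w(z)=\psi(s)$ with $s=d(z)$ and $\nabla d=E_{n}$ is a unit vector field (the distance function obeys the eikonal equation $\left\vert \nabla d\right\vert =1$), the chain rule gives $\nabla w=\psi^{\prime}(s)\,\nabla d=\psi^{\prime}(s)\,E_{n}$, so that $\left\vert \nabla w\right\vert ^{2}=\left( \psi^{\prime}(s)\right) ^{2}$ and $\sqrt{1+\left\vert \nabla w\right\vert ^{2}}=\sqrt{1+\left( \psi^{\prime}\right) ^{2}}$.

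Next I would write the vector field whose divergence appears in $Q_{H}$ as $X=\dfrac{\nabla w}{\sqrt{1+\left\vert \nabla w\right\vert ^{2}}}=f(s)\,\nabla d$, where $f=\dfrac{\psi^{\prime}}{\sqrt{1+\left( \psi^{\prime}\right) ^{2}}}$ is a function of $s$ alone. Applying the product rule $\operatorname{div}(f\,\nabla d)=\left\langle \nabla f,\nabla d\right\rangle +f\operatorname{div}(\nabla d)$ together with $\nabla f=f^{\prime}(s)\,\nabla d$, $\left\langle \nabla d,\nabla d\right\rangle =1$ and $\operatorname{div}(\nabla d)=\Delta d$ yields $\operatorname{div}X=f^{\prime}(s)+f(s)\Delta d$. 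A direct differentiation gives $f^{\prime}(s)=\dfrac{\psi^{\prime\prime}}{\left( 1+\left( \psi^{\prime}\right) ^{2}\right) ^{3/2}}$, and therefore
\[
Q_{H}(w)=\frac{\psi^{\prime\prime}}{\left( 1+\left( \psi^{\prime}\right) ^{2}\right) ^{3/2}}+\frac{\psi^{\prime}}{\sqrt{1+\left( \psi^{\prime}\right) ^{2}}}\,\Delta d+nH.
\]

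Since $\left( 1+\left( \psi^{\prime}\right) ^{2}\right) ^{3/2}>0$, the inequality $Q_{H}(w)\le 0$ holds if and only if, after multiplying through by this factor,
\[
\psi^{\prime\prime}+\psi^{\prime}\left( 1+\left( \psi^{\prime}\right) ^{2}\right) \Delta d+nH\left( 1+\left( \psi^{\prime}\right) ^{2}\right) ^{3/2}\le 0,
\]
and since $\psi^{\prime}\left( 1+\left( \psi^{\prime}\right) ^{2}\right) =\psi^{\prime}+\left( \psi^{\prime}\right) ^{3}$ this is precisely (\ref{eg}). The computation is elementary; the only step needing care is the divergence of $X$. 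If preferred, it can be carried out in the adapted orthonormal frame $\left\{ E_{j}\right\} $ with $E_{n}=\nabla d$: the tangential directions $E_{1},\dots,E_{n-1}$ contribute only through $\Delta d=\operatorname{div}(\nabla d)$, while the normal direction $E_{n}$ reproduces the derivative $f^{\prime}(s)$. I do not expect a genuine obstacle here; the main point is simply to track where the factor $\Delta d$ and the power $\left( 1+\left( \psi^{\prime}\right) ^{2}\right) ^{3/2}$ arise, so that the resulting inequality lines up exactly with the stated condition.
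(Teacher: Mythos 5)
Your proposal is correct and follows essentially the same route as the paper: both reduce the computation to the product rule for the divergence of a multiple of $\nabla d$ together with $\left\vert \nabla d\right\vert =1$, arriving at the identical closed form $Q_{0}\left(  w\right)  =\left(  1+\left[  \psi^{\prime}\right]  ^{2}\right)  ^{-3/2}\left[  \psi^{\prime\prime}+\left(  \psi^{\prime}+\left[  \psi^{\prime}\right]  ^{3}\right)  \Delta d\right]  $ before adding $nH$ and clearing the positive denominator. Your packaging of the field as $f(s)\,\nabla d$ is a slightly more streamlined bookkeeping than the paper's term-by-term expansion in the adapted frame, but the underlying argument is the same.
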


\begin{proof}
We have $Q_{H}\left(  w\right)  =Q_{0}\left(  w\right)  +nH$. After some
calculus we see that $Q_{0}\left(  w\right)  $ is
\begin{equation}
\left(  1+\left\vert \nabla w\right\vert ^{2}\right)  ^{-1/2}%
%TCIMACRO{\tsum \limits_{i=1}^{n}}%
%BeginExpansion
{\textstyle\sum\limits_{i=1}^{n}}
%EndExpansion
\left\langle \nabla_{E_{i}}^{\nabla w},E_{i}\right\rangle -\frac{1}{2}\left(
1+\left\vert \nabla w\right\vert ^{2}\right)  ^{-3/2}%
%TCIMACRO{\tsum \limits_{i=1}^{n}}%
%BeginExpansion
{\textstyle\sum\limits_{i=1}^{n}}
%EndExpansion
E_{i}\left(  \left\vert \nabla w\right\vert ^{2}\right)  E_{i}\left(
w\right)  . \label{Q0w}%
\end{equation}
Notice that $\nabla w=\psi^{\prime}E_{n}$ and so%
\begin{equation}
E_{i}\left(  \left\vert \nabla w\right\vert ^{2}\right)  =\left\{
\begin{array}
[c]{c}%
2\psi^{\prime}\psi^{\prime\prime}\text{ if }i=n\\
0\text{ if }i\neq n
\end{array}
\right.  , \label{Eiw2}%
\end{equation}%
\begin{equation}
E_{i}\left(  w\right)  =\left\{
\begin{array}
[c]{c}%
\psi^{\prime}\text{ if }i=n\\
0\text{ if }i\neq n
\end{array}
\right.  \label{Eiw}%
\end{equation}
and%
\begin{equation}
\nabla_{E_{i}}^{\nabla w}=\left\{
\begin{array}
[c]{c}%
\psi^{\prime\prime}E_{n}+\psi^{\prime}\nabla_{E_{n}}^{E_{n}}\text{ if }i=n\\
\psi^{\prime}\nabla_{E_{i}}^{E_{n}}\text{ if }i\neq n
\end{array}
\right.  . \label{DgwEi}%
\end{equation}
It follows from (\ref{DgwEi}) that%
\begin{equation}%
%TCIMACRO{\tsum \limits_{i=1}^{n}}%
%BeginExpansion
{\textstyle\sum\limits_{i=1}^{n}}
%EndExpansion
\left\langle \nabla_{E_{i}}^{\nabla w},E_{i}\right\rangle =\psi^{\prime
}\operatorname{div}\left(  E_{n}\right)  +\psi^{\prime\prime}=\psi^{\prime
}\Delta d+\psi^{\prime\prime} \label{s1}%
\end{equation}
and from (\ref{Eiw2}) and (\ref{Eiw}) that
\begin{equation}%
%TCIMACRO{\tsum \limits_{i=1}^{n}}%
%BeginExpansion
{\textstyle\sum\limits_{i=1}^{n}}
%EndExpansion
E_{i}\left(  \left\vert \nabla w\right\vert ^{2}\right)  E_{i}\left(
w\right)  =2\left[  \psi^{\prime}\right]  ^{2}\psi^{\prime\prime}. \label{s2}%
\end{equation}
Plugging (\ref{s1}) and (\ref{s2}) in (\ref{Q0w}), as $\left\vert \nabla
w\right\vert ^{2}=\left[  \psi^{\prime}\right]  ^{2}$, we obtain
\[
Q_{0}\left(  w\right)  =\left(  1+\left[  \psi^{\prime}\right]  ^{2}\right)
^{-3/2}\left[  \psi^{\prime\prime}+\left(  \psi^{\prime}+\left[  \psi^{\prime
}\right]  ^{3}\right)  \Delta d\right]
\]
and, therefore, $Q_{H}\left(  w\right)  \leq0$ if
\[
\psi^{\prime\prime}+\left(  \psi^{\prime}+\left[  \psi^{\prime}\right]
^{3}\right)  \Delta d+nH\left(  1+\left[  \psi^{\prime}\right]  ^{2}\right)
^{3/2}\leq0.
\]

\end{proof}

\begin{lemma}
\label{LcL}If $H^{\partial\Omega}\geq-c,$ $c>0$, then $\Delta d\leq\left(
n-1\right)  c$ in $U$.
\end{lemma}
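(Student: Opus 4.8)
The plan is to reduce the inequality to a one--dimensional statement along the normal geodesics that foliate $U$, and then to run a maximum--principle argument for $\Delta d$ viewed as a function of $s=d(z)$. By the very definition of $R$ in (\ref{r}), the map $\exp(p,s)=\exp_{p}(s\eta(p))$ is a diffeomorphism onto $U$ for $s\in\lbrack0,R)$; hence $d$ is smooth on $U$, the integral curves $\gamma(s)=\exp(p,s)$ of $E_{n}=\nabla d$ are unit--speed geodesics (so $\nabla_{E_{n}}E_{n}=0$), and each $z\in U$ lies on exactly one such $\gamma$ with $s=d(z)$. Thus it suffices to bound $\phi(s):=\Delta d(\gamma(s))$ on $[0,R)$.

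First I would compute the boundary value. On $\partial\Omega$ one has $\nabla d=\eta$, so $\Delta d=\operatorname{div}(E_{n})=\sum_{i=1}^{n-1}\langle\nabla_{E_{i}}\eta,E_{i}\rangle$, the $E_{n}$--term vanishing because $|E_{n}|\equiv1$. With the (geometric) convention under which $H^{\partial\Omega}$ is positive on mean--convex portions, this trace equals $-(n-1)H^{\partial\Omega}$; hence $\phi(0)=-(n-1)H^{\partial\Omega}\leq(n-1)c$ by the hypothesis $H^{\partial\Omega}\geq-c$.

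Next I would track $\phi$ along $\gamma$ via the Bochner formula. Since $|\nabla d|\equiv1$ we get $0=\tfrac{1}{2}\Delta|\nabla d|^{2}=|\operatorname{Hess}d|^{2}+\langle\nabla d,\nabla\Delta d\rangle+\operatorname{Ric}(\nabla d,\nabla d)$, and $\langle\nabla d,\nabla\Delta d\rangle=\phi^{\prime}(s)$ because $\nabla d=\gamma^{\prime}$. Therefore
\[
\phi^{\prime}(s)=-|\operatorname{Hess}d|^{2}-\operatorname{Ric}(\nabla d,\nabla d)\leq-\frac{\phi(s)^{2}}{n-1}-\operatorname{Ric}(\nabla d,\nabla d),
\]
the last inequality by Cauchy--Schwarz applied to $\operatorname{Hess}d$ restricted to $E_{n}^{\perp}$, on which it acts with trace $\phi$ and rank at most $n-1$ (note $\operatorname{Hess}d(\nabla d)=0$).

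Finally I would close by a first--crossing argument. Were the bound to fail, let $s_{1}\in(0,R)$ be the first value with $\phi(s_{1})=(n-1)c$ and $\phi$ nondecreasing there, so $\phi^{\prime}(s_{1})\geq0$; the displayed inequality would then force $\operatorname{Ric}(\nabla d,\nabla d)\leq-(n-1)c^{2}$ at $\gamma(s_{1})$. The main obstacle is exactly to rule this out: the quadratic damping $-\phi^{2}/(n-1)$ must dominate the curvature term on $[0,R)$. When $\operatorname{Ric}\geq0$ (in particular for $M=\mathbb{R}^{n}$) this is immediate, since then $\phi^{\prime}\leq-\phi^{2}/(n-1)\leq0$ and $\phi$ is nonincreasing, so $\phi\leq\phi(0)\leq(n-1)c$; in the general case the no--focal--point property of $U$ (keeping $\phi$ finite and the comparison ODE $\psi^{\prime}=-\psi^{2}/(n-1)-\operatorname{Ric}$ valid up to the focal radius) is what one must exploit to prevent the crossing and conclude $\Delta d\leq(n-1)c$ throughout $U$.
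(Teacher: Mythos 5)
Your reduction to a Riccati inequality for $\phi(s)=\Delta d(\gamma(s))$ along the normal geodesics is exactly the mechanism behind the paper's proof (which delegates it to Theorem 5.1 of \cite{KR}), and your boundary computation $\phi(0)=-(n-1)H^{\partial\Omega}\le (n-1)c$ matches the paper's convention $-H_t=\Delta d/(n-1)$. The genuine gap is in your closing step, and you half-admit it: to rule out a first crossing $\phi(s_1)=(n-1)c$ with $\phi'(s_1)\ge 0$ you need $\operatorname{Ric}(\nabla d,\nabla d)>-(n-1)c^{2}$ at $\gamma(s_1)$, and the no-focal-point property of $U$ cannot supply this. Absence of focal points only keeps $\operatorname{Hess}d$ finite on $[0,R)$; it says nothing about whether $\phi$ stays below the particular level $(n-1)c$. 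For instance, for a totally geodesic piece of boundary in $\mathbb{H}^{2}$ one has $H^{\partial\Omega}=0\ge -c$ for every $c>0$, yet $\Delta d=\tanh s$, which exceeds $(n-1)c=c$ for small $c$ at moderate $s$ even though there are no focal points at all. So the crossing cannot be prevented by injectivity considerations alone; a lower Ricci bound calibrated to $c$ is indispensable.

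That is precisely the ingredient the paper inserts: it asserts (from compactness of $\partial U$) a constant $0<k<c$ with $\operatorname{Ric}_{M}(\eta,\eta)\ge -(n-1)k^{2}$, takes the comparison function $f(t)=k\sinh\left[\operatorname{arccoth}(c/k)+kt\right]$, which satisfies $f''/f=k^{2}$ and $f'(0)/f(0)=c$, and applies Theorem 5.1 of \cite{KR} to conclude $\Delta d\le (n-1)f'/f=(n-1)k\coth\left[\operatorname{arccoth}(c/k)+kt\right]\le (n-1)c$, since $\coth$ is decreasing. With that Ricci bound in hand your ODE argument also closes: a first crossing would force $\operatorname{Ric}\le -(n-1)c^{2}<-(n-1)k^{2}$, a contradiction. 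So your skeleton is the right one (it is essentially the proof of the cited comparison theorem), but the fact you would need to ``exploit'' at the end is a curvature lower bound, not the diffeomorphism property of the normal exponential map; as written, the proof is incomplete.
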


\begin{proof}
Since $\partial U$ is compact, there is $0<k<c$ such that
\begin{equation}
Ric_{M}\left(  \eta,\eta\right)  \geq-\left(  n-1\right)  k^{2}, \label{Ric}%
\end{equation}
where $\eta$ is the normal unit vector to $\partial\Omega$ pointing to
$\Omega$. Consider $f:\left[  0,R^{\ast}\right]  \rightarrow\left(
0,+\infty\right)  $ in $C^{2}\left(  \left[  0,R^{\ast}\right]  \right)  $
defined by%
\[
f\left(  t\right)  =k\sinh\left[  \operatorname*{arccoth}\left(  \frac{c}%
{k}\right)  +kt\right]  ,
\]
where $R^{\ast}$ is $R_{\Gamma}$ or $R_{\Gamma_{i}}$, according $U$ is
$U_{\Gamma}$ or $U_{\Gamma_{i}}$. We have
\begin{equation}
\frac{f^{\prime\prime}\left(  t\right)  }{f\left(  t\right)  }=k^{2}%
,t\in\left[  0,R^{\ast}\right]  . \label{f2f}%
\end{equation}
Setting $H_{t}$ the mean curvature of $\partial U_{t}$, where $U_{t}:=\left\{
z\in\overline{U};d(z)\leq t\right\}  $. Since $H^{\partial\Omega}\geq-c$ and
$U_{0}\subset\partial\Omega$, we have%

\begin{equation}
H_{0}\geq H_{\inf}^{\partial\Omega}\geq-c\geq-\frac{f^{\prime}\left(
0\right)  }{f\left(  0\right)  }. \label{f1f}%
\end{equation}
Let $\gamma:\left[  0,R^{\ast}\right]  \longrightarrow U$ be the arc-length
parametrized geodesic such that $\gamma\left(  0\right)  \in U_{0}$ and
$\gamma^{\prime}\left(  t\right)  =\nabla d\left(  \gamma\left(  t\right)
\right)  $. We have from (\ref{Ric}) and (\ref{f2f}) that
\begin{equation}
Ric_{M}\left(  \gamma^{\prime}\left(  t\right)  ,\gamma^{\prime}\left(
t\right)  \right)  \geq-\left(  n-1\right)  \frac{f^{\prime\prime}\left(
t\right)  }{f\left(  t\right)  }, \label{c2}%
\end{equation}
for all $t\in\left[  0,R^{\ast}\right]  $. Since $\nabla d$ is a extension of
$\eta$ to $U$ and (\ref{f1f}), (\ref{c2}) occurs, it follows from Theorem 5.1
of \cite{KR} that%
\[
-H_{t}\left(  \gamma\left(  t\right)  \right)  =\frac{\Delta d\left(
\gamma\left(  t\right)  \right)  }{\left(  n-1\right)  }\leq\frac{f^{\prime
}\left(  t\right)  }{f\left(  t\right)  }.
\]
Then%
\begin{equation}
\Delta d\leq\left(  n-1\right)  \frac{f^{\prime}\left(  t\right)  }{f\left(
t\right)  }=\left(  n-1\right)  k\coth\left[  \operatorname*{arccoth}\left(
\frac{c}{k}\right)  +kt\right]  \leq\left(  n-1\right)  c. \label{estimlapla}%
\end{equation}

\end{proof}

\begin{proposition}
\label{Propos}Suppose $H^{\partial\Omega}\geq-c,$ $c>0$. Given $\lambda
>\alpha>1$ and $H\geq0$ set $\mu:=\left(  n-1\right)  c$ and define
$\psi_{\alpha,\lambda}\left(  s\right)  \equiv\psi\left(  s\right)  $ by%
\begin{equation}
\psi\left(  s\right)  =\frac{1}{\lambda\left[  \mu+nH\left(  1+\lambda\right)
\right]  }\left[  \cosh^{-1}\left(  \alpha+\lambda\left[  \mu+nH\left(
1+\lambda\right)  \right]  s\right)  -\cosh^{-1}\left(  \alpha\right)
\right]  \text{,} \label{carapsi}%
\end{equation}
$s=d\left(  z\right)  $, $z\in U$. Then $w$ as in (\ref{w}) satisfies
$Q_{H}\left(  w\right)  \leq0$ in $U_{\varepsilon}:=\left\{  z\in
U;d(z)<\varepsilon\right\}  $ where
\begin{equation}
\varepsilon=\min\left\{  R,\frac{1}{\mu+nH\left(  1+\lambda\right)  }\left(
\frac{\lambda-\alpha}{\lambda}\right)  \right\}  . \label{eps}%
\end{equation}

\end{proposition}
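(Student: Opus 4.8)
The plan is to reduce everything to the pointwise differential inequality (\ref{eg}) furnished by Lemma \ref{Lcg}. Since $w(z)=\psi(s)$ with $s=d(z)$, verifying $Q_{H}(w)\leq0$ on $U_{\varepsilon}$ amounts to checking (\ref{eg}) there, and the term involving $\Delta d$ will be controlled by the bound $\Delta d\leq\mu=(n-1)c$ of Lemma \ref{LcL}; note that the condition $\varepsilon\leq R$ in (\ref{eps}) guarantees $U_{\varepsilon}\subseteq U$, so that $d$ is smooth and Lemma \ref{LcL} applies throughout. To lighten the notation I would abbreviate $\beta:=\lambda[\mu+nH(1+\lambda)]$ and $g(s):=\alpha+\beta s$, so that $\psi(s)=\beta^{-1}[\cosh^{-1}(g(s))-\cosh^{-1}(\alpha)]$ and $g(s)>\alpha>1$ on $\overline{U}$.

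First I would differentiate. Using $(\cosh^{-1})'(x)=(x^{2}-1)^{-1/2}$ together with $g'=\beta$, one obtains the clean expressions
\[
\psi'=\frac{1}{\sqrt{g^{2}-1}},\qquad \psi''=-\frac{\beta g}{(g^{2}-1)^{3/2}}.
\]
From these it follows that $\psi'+[\psi']^{3}=g^{2}(g^{2}-1)^{-3/2}$ and $(1+[\psi']^{2})^{3/2}=g^{3}(g^{2}-1)^{-3/2}$, the decisive structural point being that every term in (\ref{eg}) carries the common positive factor $g(g^{2}-1)^{-3/2}$ (positivity here uses $g>1$). This is exactly the feature for which the $\cosh^{-1}$ form of $\psi$ was engineered.

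Next I would substitute into the left-hand side of (\ref{eg}) and factor out $g(g^{2}-1)^{-3/2}>0$, reducing the inequality to
\[
-\beta+g\,\Delta d+nH g^{2}\leq0.
\]
Invoking $\Delta d\leq\mu$ and $g>0$, it then suffices to prove $g\mu+nHg^{2}\leq\beta$. This is where the choice of $\varepsilon$ enters: the bound $\varepsilon\leq(\lambda-\alpha)/\beta$ forces $\beta s<\lambda-\alpha$ on $U_{\varepsilon}$, hence $g=\alpha+\beta s<\lambda$. Consequently $g\mu<\lambda\mu$ and $g^{2}<\lambda^{2}\leq\lambda(1+\lambda)$, so that $g\mu+nHg^{2}<\lambda\mu+nH\lambda(1+\lambda)=\lambda[\mu+nH(1+\lambda)]=\beta$, which is precisely the required estimate.

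The computation is essentially mechanical, so I expect the only real obstacle to be bookkeeping: confirming that the three summands of (\ref{eg}) genuinely share the factor $g(g^{2}-1)^{-3/2}$, and recognizing that the residual polynomial inequality $g\mu+nHg^{2}\leq\beta$ is exactly the one secured by the definition (\ref{eps}) of $\varepsilon$. One must also keep the standing hypotheses in view — $g>1>0$, which legitimizes both the factoring and every square root, and $\mu=(n-1)c>0$ (valid since $n\geq2$), which makes the estimates strict and thereby closes the argument uniformly in $H\geq0$.
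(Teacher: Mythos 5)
Your proof is correct and follows essentially the same route as the paper: reduce to the differential inequality of Lemma \ref{Lcg}, control $\Delta d$ by $\mu$ via Lemma \ref{LcL}, and use $g=\alpha+\beta s<\lambda$ on $U_{\varepsilon}$ to close the resulting algebraic inequality. The only (harmless) difference is that you compute $\left(1+[\psi']^{2}\right)^{3/2}=g^{3}(g^{2}-1)^{-3/2}$ exactly and arrive at the sufficient condition $\mu g+nHg^{2}\leq\beta$, whereas the paper twice bounds the $H$-term from above (via $(1+x^{2})^{1/2}\leq 1+x$ and $(u^{2}-1)^{1/2}\leq u$) and lands on the slightly weaker-looking but equally valid condition $(\mu+nH)u+nHu^{2}\leq b$.
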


\begin{proof}
From Lemma \ref{Lcg} and Lemma \ref{LcL} we have $Q_{H}\left(  w\right)
\leq0$ in $U$ if%
\begin{equation}
\psi^{\prime\prime}+\left[  \left(  n-1\right)  c+nH\right]  \left(
\psi^{\prime}\right)  ^{3}+nH\left(  \psi^{\prime}\right)  ^{2}+\left[
\left(  n-1\right)  c+nH\right]  \psi^{\prime}+nH\leq0 \label{o}%
\end{equation}
where $w\left(  z\right)  =\psi\left(  s\right)  $, $s=d\left(  z\right)  $.
Notice that (\ref{o}) can be rewritten as%
\begin{equation}
\psi^{\prime\prime}+B\left(  \psi^{\prime}+\left[  \psi^{\prime}\right]
^{3}\right)  +nH\left(  1+\left[  \psi^{\prime}\right]  ^{2}\right)  \leq0,
\label{or}%
\end{equation}
where $B=\left(  n-1\right)  c+nH=\mu+nH$, with $\mu=\left(  n-1\right)  c$.

We choose%
\[
\psi\left(  s\right)  =a\cosh^{-1}\left(  \alpha+bs\right)  -a\cosh
^{-1}\left(  \alpha\right)  \text{, }\alpha>1
\]
where $a,b$ are positive constants to be determined and $\alpha>1$.

Setting $u\left(  s\right)  =\alpha+bs$, since $u^{\prime}\left(  s\right)
=b$ it follows that%
\[
\psi^{\prime}=\frac{ab}{\left(  u^{2}-1\right)  ^{1/2}},
\]
and
\[
\psi^{\prime\prime}=-\psi^{\prime}\left(  \frac{bu}{u^{2}-1}\right)  .
\]
Then, from (\ref{or}) we see that $Q_{H}\left(  w\right)  \leq0$ if%
\[
\left[  -bu+B\left(  u^{2}-1+a^{2}b^{2}\right)  \right]  \psi^{\prime
}+nH\left(  u^{2}-1+a^{2}b^{2}\right)  \leq0
\]
that is, if%
\[
\left[  -bu+B\left(  u^{2}-1+a^{2}b^{2}\right)  \right]  ab+nH\left(
u^{2}-1+a^{2}b^{2}\right)  \left(  u^{2}-1\right)  ^{1/2}\leq0.
\]
We assume $a,b$ such that $ab=1$. Then the last inequality is true if%
\[
-b+Bu+nHu\left(  u^{2}-1\right)  ^{1/2}\leq0
\]
which is true if%
\[
-b+Bu+nHu^{2}\leq0.
\]
As $u\left(  s\right)  =\alpha+bs$, the last inequality is%
\begin{equation}
nHb^{2}s^{2}+b\left[  2nH\alpha+B\right]  s+B\alpha+nH\alpha^{2}-b\leq0.
\label{orw}%
\end{equation}
Let $\lambda>\alpha$. We assume
\[
b:=\lambda\left[  \mu+nH\left(  1+\lambda\right)  \right]  =B\lambda
+nH\lambda^{2}>B\alpha+nH\alpha^{2}.
\]
We see that (\ref{orw}) is true for $s\in\left[  0,\varepsilon\right]  $,
where $\varepsilon$ is given by (\ref{eps}). This concludes the proof of the proposition.
\end{proof}

\begin{lemma}
\label{cor_pro}Under the hypothesis of Proposition \ref{Propos},
\[
\psi_{\alpha,\lambda}\left(  \varepsilon\right)  <\frac{\cosh^{-1}\left(
\delta\right)  }{\delta\left(  \mu+2nH\right)  }%
\]
where $\delta$ ($\approx1.8102$) is the solution of the equation
$x=\cosh\left(  x\left(  x^{2}-1\right)  ^{-1/2}\right)  $, $x>1$. In
particular,
\begin{equation}
\underset{\alpha\rightarrow1,\lambda\rightarrow\delta}{\lim}\psi
_{\alpha,\lambda}\left(  \varepsilon\right)  =\frac{\cosh^{-1}\left[
1+\delta\left(  \mu+nH\left(  1+\delta\right)  \right)  \rho\right]  }%
{\delta\left(  \mu+nH\left(  1+\delta\right)  \right)  } \label{ro}%
\end{equation}
where
\[
\rho=\min\left\{  R,\frac{1}{\mu+nH\left(  1+\delta\right)  }\left(
\frac{\delta-1}{\delta}\right)  \right\}
\]

\end{lemma}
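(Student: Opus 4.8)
The plan is to reduce everything to a single elementary fact about the function $g(x)=\cosh^{-1}(x)/x$ on $(1,\infty)$, namely that $\delta$ is its global maximizer. To establish this I would differentiate,
\[
g^{\prime}(x)=x^{-2}\left[\,x\left(x^{2}-1\right)^{-1/2}-\cosh^{-1}(x)\,\right],
\]
so that a critical point satisfies $\cosh^{-1}(x)=x\left(x^{2}-1\right)^{-1/2}$; applying $\cosh$ to both sides gives precisely the equation $x=\cosh\left(x\left(x^{2}-1\right)^{-1/2}\right)$ that defines $\delta$. Since $g(x)\to0$ both as $x\to1^{+}$ and as $x\to\infty$, this critical point is the global maximum, whence $\cosh^{-1}(\lambda)/\lambda\leq\cosh^{-1}(\delta)/\delta$ for every $\lambda>1$.

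Abbreviating $B:=\mu+nH\left(1+\lambda\right)$, I would next observe that, whichever term realizes the minimum in (\ref{eps}), one always has $\alpha+\lambda B\varepsilon\leq\lambda$: if $\varepsilon=(\lambda-\alpha)/(\lambda B)$ then $\alpha+\lambda B\varepsilon=\lambda$ exactly, while if $\varepsilon=R$ then $\varepsilon\leq(\lambda-\alpha)/(\lambda B)$ yields the inequality. Because $\cosh^{-1}$ is increasing and $\cosh^{-1}(\alpha)>0$ (as $\alpha>1$), the explicit formula (\ref{carapsi}) then gives the strict bound
\[
\psi_{\alpha,\lambda}\left(\varepsilon\right)=\frac{\cosh^{-1}\left(\alpha+\lambda B\varepsilon\right)-\cosh^{-1}\left(\alpha\right)}{\lambda B}<\frac{\cosh^{-1}\left(\lambda\right)}{\lambda B}.
\]
Chaining this with the maximality of $\delta$ and with the lower bound $B=\mu+nH\left(1+\lambda\right)\geq\mu+2nH$ (valid since $\lambda>1$), I obtain
\[
\psi_{\alpha,\lambda}\left(\varepsilon\right)<\frac{\cosh^{-1}\left(\lambda\right)}{\lambda}\cdot\frac{1}{B}\leq\frac{\cosh^{-1}\left(\delta\right)}{\delta}\cdot\frac{1}{\mu+2nH},
\]
which is the asserted inequality.

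The limit (\ref{ro}) is then a routine continuity computation: as $\alpha\to1^{+}$ and $\lambda\to\delta$ we have $B\to\mu+nH\left(1+\delta\right)$, the cutoff $\varepsilon$ tends to $\rho$ (the minimum of two continuous functions being continuous), $\cosh^{-1}(\alpha)\to0$, and the argument $\alpha+\lambda B\varepsilon\to1+\delta\left(\mu+nH\left(1+\delta\right)\right)\rho$, which together produce the stated right-hand side. The only genuinely nontrivial step is the first one, recognizing that the equation defining $\delta$ is exactly the critical-point condition for $\cosh^{-1}(x)/x$; once that is in hand the remainder is bookkeeping, and I expect no obstacle beyond ensuring the strict inequality survives (it does, thanks to the $-\cosh^{-1}(\alpha)<0$ term) and handling the two-variable limit via the continuity of $\min$.
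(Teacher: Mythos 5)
Your proposal is correct and follows essentially the same route as the paper: drop the $-\cosh^{-1}(\alpha)$ term for strictness, bound the argument of $\cosh^{-1}$ by $\lambda$ using the definition of $\varepsilon$, use $\mu+nH(1+\lambda)\geq\mu+2nH$, and identify $\delta$ as the global maximizer of $\cosh^{-1}(x)/x$ via the critical-point equation $x=\cosh\left(x\left(x^{2}-1\right)^{-1/2}\right)$. The only (welcome) addition is that you make explicit the boundary behavior $g(x)\to0$ as $x\to1^{+}$ and $x\to\infty$, which the paper leaves implicit when asserting that $\delta$ is the global maximum.
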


\begin{proof}
Notice that, since $1<\alpha<\lambda$,
\begin{align*}
\psi_{\alpha,\lambda}\left(  \varepsilon\right)   &  =\frac{\cosh^{-1}\left(
\alpha+\lambda\left(  \mu+nH\left(  1+\lambda\right)  \right)  \varepsilon
\right)  -\cosh^{-1}\left(  \alpha\right)  }{\lambda\left(  \mu+nH\left(
1+\lambda\right)  \right)  }\\
&  <\frac{\cosh^{-1}\left(  \alpha+\lambda\left(  \mu+nH\left(  1+\lambda
\right)  \right)  \varepsilon\right)  }{\lambda\left(  \mu+nH\left(
1+\lambda\right)  \right)  }\\
&  \leq\frac{\cosh^{-1}\left(  \lambda\right)  }{\lambda\left(  \mu+nH\left(
1+\lambda\right)  \right)  }\leq\frac{\cosh^{-1}\left(  \lambda\right)
}{\lambda\left(  \mu+2nH\right)  }.
\end{align*}
Set
\[
f\left(  \lambda\right)  =\frac{\cosh^{-1}\left(  \lambda\right)  }%
{\lambda\left(  \mu+2nH\right)  }\text{, }\lambda>1\text{.}%
\]
We have $f^{\prime}\left(  \lambda\right)  =0$ iff%
\begin{equation}
\lambda=\cosh\left(  \frac{\lambda}{\sqrt{\lambda^{2}-1}}\right)  \text{,
}\lambda>1. \label{eqdelta}%
\end{equation}
The equation (\ref{eqdelta}) has a unique solution $\delta$ ($\approx1.8102$),
and $\delta$ is the maximum (global) point for $f$. Then
\[
\psi_{\alpha,\lambda}\left(  \varepsilon\right)  <\frac{1}{\delta\left(
\mu+2nH\right)  }\cosh^{-1}\left(  \delta\right)  .
\]
The equality (\ref{ro}) follows immediately from definition of $\psi$ and
$\varepsilon$ (observing that both depend on $\alpha$ and $\lambda$).
\end{proof}

\begin{lemma}
\label{height}Let $M$ be a Hadamard manifold.Let $G$ $\subset M\mathbb{\times
R}$ be a compact graph of constant mean curvature $H>0$ over a domain
$\Omega\subset M$ and such that $\partial G=\partial\Omega$ and let $\hbar$ be
the height of $G$. If $\Omega$ is contained in a normal ball in $M$ of radius
$\Re\leq1/H$ then
\begin{equation}
\hbar\leq\frac{H\Re^{2}}{1+\sqrt{1-\left(  H\Re\right)  ^{2}}}.\label{estim}%
\end{equation}

\end{lemma}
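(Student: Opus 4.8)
The plan is to bound $G$ from above by a geodesic spherical cap modeled on the constant–curvature sphere of radius $1/H$. Set $r:=1/H$, so the hypothesis $\Re\leq 1/H$ reads $r\geq\Re$ and guarantees $r^{2}-\Re^{2}\geq 0$. Let $p$ be the center of the normal ball $B(p,\Re)\supset\Omega$, write $\rho(z):=d(z,p)$, and define the radial barrier
\[
v(z)=\sqrt{r^{2}-\rho(z)^{2}}-\sqrt{r^{2}-\Re^{2}},\qquad z\in\overline{\Omega}.
\]
Since $\rho^{2}$ is smooth on the normal ball and $v$ is a smooth function of $\rho^{2}$, we have $v\in C^{2}(\overline{\Omega})$; in the Euclidean model $v$ is exactly the lower spherical cap of radius $r$ over a disc of radius $\Re$, whose apex height is the right–hand side of (\ref{estim}).

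Next I would show $Q_{H}(v)\leq 0$ in $\Omega$. Writing $v=\phi(\rho)$ with $\phi(s)=\sqrt{r^{2}-s^{2}}-\sqrt{r^{2}-\Re^{2}}$, one has $\phi'(s)=-s(r^{2}-s^{2})^{-1/2}\leq 0$ and $\phi''(s)=-r^{2}(r^{2}-s^{2})^{-3/2}$. The computation in Lemma \ref{Lcg} is valid verbatim with the distance $d$ to $\partial\Omega$ replaced by the distance $\rho$ to the point $p$, since it only uses $|\nabla\rho|=1$ and $\operatorname{div}(\nabla\rho)=\Delta\rho$; hence by (\ref{eg}) it suffices to verify
\[
\phi''+\left(\phi'+[\phi']^{3}\right)\Delta\rho+nH\left(1+[\phi']^{2}\right)^{3/2}\leq 0.
\]
Here the Hadamard hypothesis enters through the Laplacian comparison theorem, which gives $\Delta\rho\geq(n-1)/\rho$; because $\phi'+[\phi']^{3}=\phi'\left(1+[\phi']^{2}\right)\leq 0$, replacing $\Delta\rho$ by the smaller quantity $(n-1)/\rho$ only enlarges the left–hand side. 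A direct substitution, using $nH=n/r$, shows that the resulting majorant is identically $0$, so (\ref{eg}) holds and $Q_{H}(v)\leq 0$; the value at $p$ (if $p\in\overline{\Omega}$) is recovered by continuity.

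Finally I would invoke the comparison principle. Rewriting $Q_{H}(u)=0$ as $\operatorname{div}\bigl(\nabla u/\sqrt{1+|\nabla u|^{2}}\bigr)=-nH\leq 0$ exhibits $u$ as a supersolution of the minimal surface operator, so $u$ attains its minimum on $\partial\Omega=\partial G$; thus $u\geq 0$ and $\hbar=\max_{\overline{\Omega}}u$. On $\partial\Omega$ one has $\rho\leq\Re$, hence $v\geq 0=u$ there. Since $Q_{H}(v)\leq 0=Q_{H}(u)$, $u\leq v$ on $\partial\Omega$, and the mean curvature operator is elliptic with a zeroth–order term independent of the unknown, the comparison principle yields $u\leq v$ in $\Omega$. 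Consequently
\[
\hbar=\max_{\overline{\Omega}}u\leq\max_{\overline{\Omega}}v\leq\phi(0)=r-\sqrt{r^{2}-\Re^{2}}=\frac{H\Re^{2}}{1+\sqrt{1-(H\Re)^{2}}},
\]
where the last equality follows by rationalizing with $r=1/H$.

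The crux is the inequality $Q_{H}(v)\leq 0$: it rests on the correct direction of the Laplacian comparison (nonpositive curvature forces $\Delta\rho\geq(n-1)/\rho$, which makes the geodesic cap a supersolution rather than a subsolution) together with the sign $\phi'+[\phi']^{3}\leq 0$; an error in either sign would reverse the estimate. The smoothness of $v$ at the center $p$ and the precise form of the comparison principle for $Q_{H}$ are then routine.
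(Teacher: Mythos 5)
Your proof is correct and follows essentially the same route as the paper: the same radial barrier $v=\sqrt{1/H^{2}-\rho^{2}}-\sqrt{1/H^{2}-\Re^{2}}$ built from the distance to the center of the normal ball, the Laplacian comparison $\Delta\rho\geq\Delta d_{E}=(n-1)/\rho$ on a Hadamard manifold to get $Q_{H}(v)\leq0$, and the identification of the Euclidean majorant with the spherical cap of radius $1/H$. You merely make explicit the sign analysis of $\phi'+[\phi']^{3}$ and the final comparison-principle step that the paper leaves implicit.
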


\begin{proof}
Let $p$ the center of the normal ball and set $d_{p}\left(  z\right)
=d\left(  z,p\right)  $, $z\in\Omega$. Consider at $z\in\Omega$ an orthonormal
referential frame $\left\{  E_{j}\right\}  $\emph{\ }of\emph{\ }$T_{z}%
M,$\emph{\ }$j=1,...,n$, where\emph{\ }$E_{n}=\operatorname{grad}d_{p}$.

Let $v:=g\circ d_{p}:\Omega\longrightarrow\mathbb{R}$, where%
\[
g\left(  s\right)  =-\sqrt{\frac{1}{H^{2}}-\Re^{2}}+\sqrt{\frac{1}{H}-s^{2}%
}\text{, }s=d_{p}\left(  z\right)  \text{.}%
\]
We have $Q_{H}\left(  v\right)  $ given by
\[
Q_{H}\left(  v\right)  =\left(  1+(g^{\prime})^{2}\right)  ^{\frac{-3}{2}%
}\left[  g^{\prime\prime}+\left(  g^{\prime}+(g^{\prime})^{3}\right)  \Delta
d_{p}+nH\left(  1+(g^{\prime})^{2}\right)  ^{\frac{3}{2}}\right]  .
\]
As the sectional curvature of $M$ is $K_{M}\leq0$, by the Laplacian Comparison
Theorem we have $\Delta d_{p}\geq\Delta d_{E}$, where $d_{E}$ is the Euclidean
distance. Then%
\[
Q_{H}\left(  v\right)  \leq\left(  1+(g^{\prime})^{2}\right)  ^{\frac{-3}{2}%
}\left[  g^{\prime\prime}+\left(  g^{\prime}+(g^{\prime})^{3}\right)  \Delta
d_{E}+nH\left(  1+(g^{\prime})^{2}\right)  ^{\frac{3}{2}}\right]  =0,
\]
where the equality is due that, for $M=\mathbb{R}^{n}$, the graph of $g\left(
s\right)  $ is a spherical cap.

The result follow now of the fact that the maximum height of the graph of $v$
is
\[
\frac{H\Re^{2}}{1+\sqrt{1-\left(  H\Re\right)  ^{2}}}.
\]

\end{proof}

\section{Proof of Theorems}

Proof of Theorem \ref{minimal}

\begin{proof}
Since
\[
h<\frac{1}{\delta\left(  n-1\right)  \left\vert H_{\inf}^{\partial\Omega
}\right\vert }\cosh^{-1}\left(  1+\tau\delta\left(  n-1\right)  \left\vert
H_{\inf}^{\partial\Omega}\right\vert \right)
\]
where
\[
\tau=\min\left\{  R,\frac{\delta-1}{\delta\left(  n-1\right)  \left\vert
H_{\inf}^{\partial\Omega}\right\vert }\right\}  ,
\]
from Lemma \ref{cor_pro} there is $1<\alpha<\delta$, $\alpha$ close enough to
$1$, such that, setting
\begin{equation}
\psi\left(  s\right)  =\frac{1}{\delta\left(  n-1\right)  \left\vert H_{\inf
}^{\partial\Omega}\right\vert }\left[  \cosh^{-1}\left(  \alpha+\delta\left(
n-1\right)  \left\vert H_{\inf}^{\partial\Omega}\right\vert s\right)
-\cosh^{-1}\left(  \alpha\right)  \right]  \text{,} \label{npsi}%
\end{equation}
$s=d\left(  z\right)  $, the function $w\left(  z\right)  =\mathfrak{c}%
+\psi\left(  s\right)  $, $\mathfrak{c}$ constant, satisfies $Q_{0}\left(
w\right)  \leq0$ in $U_{\varepsilon}=\left\{  z\in U;d(z)<\varepsilon\right\}
$ where
\[
\varepsilon=\min\left\{  R,\frac{1}{\left(  n-1\right)  \left\vert H_{\inf
}^{\partial\Omega}\right\vert }\left(  \frac{\delta-\alpha}{\delta}\right)
\right\}  .
\]
Moreover,
\[
h\leq\psi\left(  \varepsilon\right)  <\frac{1}{\delta\left(  n-1\right)
\left\vert H_{\inf}^{\partial\Omega}\right\vert }\cosh^{-1}\left(
1+\tau\delta\left(  n-1\right)  \left\vert H_{\inf}^{\partial\Omega
}\right\vert \right)  .
\]
Notice that
\[
U_{\varepsilon}=\left\{
\begin{array}
[c]{c}%
U_{\Gamma}^{\epsilon}:=\left\{  z\in U_{\Gamma};d(z,\Gamma)<\varepsilon
\right\}  \text{ if }U=U_{\Gamma}\\
U_{\Gamma_{i}}^{\varepsilon}:=\left\{  z\in U_{\Gamma_{i}};d(z,\Gamma
_{i})<\varepsilon\right\}  \text{ if }U=U_{\Gamma_{i}}%
\end{array}
\right.  ,
\]
and then we can consider the supersolutions $w_{\Gamma}\in C^{2}\left(
\overline{U}_{\Gamma}^{\varepsilon}\right)  $ and $w_{\Gamma_{i},t}\in
C^{2}\left(  \overline{U}_{\Gamma_{i}}^{\varepsilon}\right)  $, $t\in\left[
0,1\right]  $, relatively to the operator $Q_{0}$, given by $w_{\Gamma}\left(
z\right)  =\left(  \psi\circ d\right)  \left(  z\right)  $ and $w_{\Gamma
_{i},t}\left(  z\right)  =th+\left(  \psi\circ d\right)  \left(  z\right)  $
respectively, where $\psi\circ d$ is given by (\ref{npsi}). It follows that
$w_{\Gamma}=0$ in $\Gamma=\partial U_{\Gamma}^{\epsilon}\cap\partial\Omega$
and $h\leq w_{\Gamma}\left(  \varepsilon\right)  $ in $\partial U_{\Gamma
}^{\epsilon}\backslash\Gamma$. Moreover, $w_{\Gamma_{i},t}\geq th$ in
$\overline{U}_{\Gamma_{i}}^{\epsilon}$ and, then, in $\overline{U}_{\Gamma
_{i}}^{\varepsilon}\cap\overline{\Omega}$. Let $\varphi\in C^{\infty}\left(
\partial\Omega\right)  $ given by
\begin{equation}
\varphi\left(  p\right)  =\left\{
\begin{array}
[c]{c}%
0\text{ if }p\in\Gamma\\
h\text{ if }p\in\Gamma_{i}\text{, }i=1,...,m
\end{array}
\right.  . \label{nphi}%
\end{equation}
From the Maximum Principle, it follows that\emph{ }$w_{\Gamma}$ and\emph{
}$w_{\Gamma_{i},t}$ are upper local barriers relatively to the boundary data
$t\varphi$ and, for lower local barriers relatively to $t\varphi$, just take
$w_{\Gamma}^{-}=0$ and $w_{\Gamma_{i},t}^{-}=th-\psi\circ d$ in the domains
$\overline{U}_{\Gamma}^{\varepsilon}$ and $\overline{U}_{\Gamma_{i}%
}^{\varepsilon}$ respectively.

Now, set%
\[
V=\{t\in\lbrack0,1];\exists u_{t}\in C^{2,\alpha}\left(  \overline{\Omega
}\right)  \text{ satisfying }Q_{0}\left(  u_{t}\right)  =0,\text{ }%
u_{t}|_{\partial\Omega}=t\varphi\}\text{.}%
\]
We have $V\neq\varnothing$ since $t=0\in V$. Moreover, since $Q_{0}$ is a
uniformly elliptic operator on $C^{2,\alpha}\left(  \overline{\Omega}\right)
$ we can apply the implicit function theorem in Banach spaces to conclude that
$V$ is an open. Now, we apply a standard sequence of arguments to conclude
that $V$ is closed. Let $\left(  t_{n}\right)  \subset V$ a sequence with
$t_{n}\rightarrow t\in\left[  0,1\right]  $. For each $n$, let $u_{t_{n}}\in
C^{2,\alpha}\left(  \overline{\Omega}\right)  $ satisfying $Q_{0}\left(
u_{t_{n}}\right)  =0$, $u_{t_{n}}|_{\partial\Omega}=t_{n}\varphi$. From the
barriers above\emph{ }it follows that the sequence $\left(  u_{t_{n}}\right)
$ has uniformly bounded $C^{0}$ norm. Moreover%
\[
\max_{\partial\Omega}\left\vert \nabla u_{t_{n}}\right\vert \leq\max
_{\partial\Omega}\left\vert \nabla w_{\Gamma}\right\vert <\infty.
\]
It follows of Section $5$ of \cite{DHL} that there is $K>0$ such that
$\max_{\Omega}\left\vert \nabla u_{t_{n}}\right\vert \leq K$ and,
consequently, $\left\vert u_{t_{n}}\right\vert _{1}\leq\overline{K}<\infty$
with the constant $\overline{K}$ independent of $n$. H\"{o}lder estimates and
PDE linear elliptic theory - see \cite{GT} - give us that $\left(  u_{t_{n}%
}\right)  $ is equicontinous in the $C^{2,\beta}$ norm for some $\beta>0$. It
follows that $\left(  u_{t_{n}}\right)  $ contains a subsequence converging
uniformly on the $C^{2}$ norm to a solution $u\in C^{2}\left(  \overline
{\Omega}\right)  $. Regularity theory of linear elliptic PDE (\cite{GT})
implies that $u\in C^{2,\alpha}\left(  \overline{\Omega}\right)  $. Therefore,
$V$ is closed, that is, $V=[0,1]$ and this gives us the existence result. The
uniqueness of the solution is a consequence of the Maximum Principle for the
difference of two solutions.
\end{proof}

\begin{remark}
\label{R1}: Denote by $r$ the biggest positive number such that the normal
exponential map
\begin{equation}
\exp_{\partial\Omega}:\partial\Omega\times\lbrack0,r)\longrightarrow
U_{\partial\Omega}:=\left\{  z\in\Omega;d(z,\partial\Omega)<r\right\}
\subset\overline{\Omega}, \label{Up}%
\end{equation}
is a diffeomorphism. We call $r$ the "injectivity radius of $\partial\Omega$".
Notice that $r\leq R$ and we can have $r<R$. The estimative of $h$ in Theorem
1 of \cite{ARS}, relatively the Dirichlet problem (\ref{dirH}), is $h\leq
B^{-1}\ln\left(  1+B\overline{\varepsilon}\right)  $, where $B=6\left(
1+r^{\prime}\right)  \left(  n-1\right)  \left\vert H_{\inf}^{\partial\Omega
}\right\vert $ and $\overline{\varepsilon}=\min\left\{  r^{\prime},1/\left(
2B\right)  \right\}  $, for some $0<r^{\prime}\leq r$. Then, depending of the
domain $\Omega$, we have some improvement on the estimate of\emph{ }$h$ in
Theorem \ref{minimal} when we compare with that in Theorem 1 of \cite{ARS}.
For example, if $\Omega$ is such that $1/\left(  2B\right)  <r^{\prime}$ and
\[
\frac{\delta-1}{\delta\left(  n-1\right)  \left\vert H_{\inf}^{\partial\Omega
}\right\vert }<R.
\]
Moreover, using $R$ instead of $r$ we are more in line with Theorem 2.1 of
\cite{ER} and Theorem 1.1 of \cite{B}.
\end{remark}

We pass now to the proof of Theorem \ref{H+}

\begin{proof}
Set $\mu=\left(  n-1\right)  \left\vert H_{\inf}^{\partial\Omega}\right\vert
$, and
\begin{equation}
C=\tfrac{2\delta\mu\cosh^{-1}(1+\delta\mu\sigma)}{\left[  \cosh^{-1}%
(1+\delta\left[  \mu+\frac{n}{\Re}\left(  1+\delta\right)  \right]
\sigma)\right]  ^{2}+\mu\delta^{2}\Re\left[  \mu\Re+2n\left(  1+\delta\right)
\right]  +\delta n\left(  1+\delta\right)  \left[  \delta n\left(
1+\delta\right)  -2\cosh^{-1}(1+\delta\mu\sigma)\right]  }, \label{tc}%
\end{equation}
where%
\[
\sigma=\min\left\{  R,\frac{\delta-1}{\delta\left[  \mu+\frac{n}{\Re}\left(
\delta+1\right)  \right]  }\right\}  .
\]
We first notice that
\[
0<C<\frac{1}{\Re}.
\]
In fact, $C>0$ since
\begin{align}
n\delta\left(  1+\delta\right)  -2\cosh^{-1}(1+\delta\mu\sigma)  &  \geq
n\delta\left(  1+\delta\right)  -2\cosh^{-1}\delta\label{m0}\\
&  >5.087-2.3994>0.\nonumber
\end{align}
On the other hand, $C<1/\Re$ since we have (\ref{m0}) and
\[
\frac{2\delta\mu\cosh^{-1}(1+\delta\mu\sigma)}{\mu\delta^{2}\Re\left[  \mu
\Re+2n\left(  1+\delta\right)  \right]  }\leq\frac{2\delta\mu\cosh^{-1}\delta
}{\mu\delta^{2}\Re\left[  2n\left(  1+\delta\right)  \right]  }=\left(
\frac{1}{\Re}\right)  \frac{\cosh^{-1}\delta}{\delta n\left(  1+\delta\right)
}<\frac{1}{\Re}\text{.}%
\]
Let $0\leq H\leq C<1/\Re$. We claim that
\begin{equation}
\frac{\cosh^{-1}(1+\delta\left[  \mu+nH\left(  1+\delta\right)  \right]
\rho)}{\delta\left[  \mu+nH\left(  1+\delta\right)  \right]  }-\frac{H\Re^{2}%
}{1+\sqrt{1-H^{2}\Re^{2}}}>0, \label{le0}%
\end{equation}
where
\[
\rho=\min\left\{  R,\frac{\delta-1}{\delta\left[  \mu+nH\left(  \delta
+1\right)  \right]  }\right\}  .
\]
Indeed, notice that, if $H=0$ then (\ref{le0}) is true. Suppose $H>0$. After
some calculations, we have (\ref{le0}) iff%
\begin{align}
&  \frac{2\delta\mu\cosh^{-1}(1+\delta\left[  \mu+nH\left(  1+\delta\right)
\right]  \rho)}{H}-2\delta^{2}\mu nH\Re^{2}\left(  1+\delta\right)
-\delta^{2}n^{2}H^{2}\Re^{2}\left(  1+\delta\right)  ^{2}\nonumber\\
&  >\left[  \cosh^{-1}(1+\delta\left[  \mu+nH\left(  1+\delta\right)  \right]
\rho)\right]  ^{2}+\nonumber\\
&  +\Re^{2}\delta^{2}\mu^{2}-2\delta n\left(  1+\delta\right)  \cosh
^{-1}(1+\delta\left[  \mu+nH\left(  1+\delta\right)  \right]  \rho).
\label{le1}%
\end{align}
Notice that, since $0<H<1/\Re,$
\begin{align}
\cosh^{-1}(1+\delta\mu\rho)  &  <\cosh^{-1}(1+\delta\left[  \mu+nH\left(
1+\delta\right)  \right]  \rho)\label{le4}\\
&  \leq\cosh^{-1}(1+\delta\left[  \mu+\frac{n}{\Re}\left(  1+\delta\right)
\right]  \rho)\nonumber
\end{align}
and then, from (\ref{le4}), we have that if
\begin{align}
&  \frac{2\delta\mu\cosh^{-1}(1+\delta\mu\rho)}{H}-2\delta^{2}\mu n\Re\left(
1+\delta\right)  -\delta^{2}n^{2}\left(  1+\delta\right)  ^{2}\label{le5}\\
&  >\left[  \cosh^{-1}(1+\delta\left[  \mu+\frac{n}{\Re}\left(  1+\delta
\right)  \right]  \rho)\right]  ^{2}+\Re^{2}\delta^{2}\mu^{2}-2\delta n\left(
1+\delta\right)  \cosh^{-1}(1+\delta\mu\rho)\nonumber
\end{align}
then (\ref{le1}) is true. Notice that since $0<H<1/\Re$ we have $\sigma
\leq\rho$ and, then, (\ref{le5}) occurs if%
\begin{align}
&  \frac{2\delta\mu\cosh^{-1}(1+\delta\mu\sigma)}{H}-2\delta^{2}\mu
n\Re\left(  1+\delta\right)  -\delta^{2}n^{2}\left(  1+\delta\right)
^{2}\label{le6}\\
&  >\left[  \cosh^{-1}(1+\delta\left[  \mu+\frac{n}{\Re}\left(  1+\delta
\right)  \right]  \sigma)\right]  ^{2}+\Re^{2}\delta^{2}\mu^{2}-2\delta
n\left(  1+\delta\right)  \cosh^{-1}(1+\delta\mu\sigma).\nonumber
\end{align}
Notice that (\ref{le6}) is equivalent to $H<C$ and this concludes the proof of
the claim.

As $\sigma\leq\rho$, it follows that%
\begin{equation}
0<h_{H}\leq\frac{\cosh^{-1}(1+\delta\left[  \mu+nH\left(  1+\delta\right)
\right]  \rho)}{\delta\left[  \mu+nH\left(  1+\delta\right)  \right]  }%
-\frac{H\Re^{2}}{1+\sqrt{1-H^{2}\Re^{2}}}, \label{ee}%
\end{equation}
and, as $h\leq h_{H}$, from Lemma \ref{cor_pro} there is $1<\alpha<\delta$,
$\alpha$ close enough to $1$, such that, setting
\begin{equation}
\psi\left(  s\right)  =\frac{\cosh^{-1}(\alpha+\delta\left[  \mu+nH\left(
1+\delta\right)  \right]  s)-\cosh^{-1}\left(  \alpha\right)  }{\delta\left[
\mu+nH\left(  1+\delta\right)  \right]  }\text{,} \label{npsi2}%
\end{equation}
$s=d\left(  z\right)  $, the function $w\left(  z\right)  =\mathfrak{c}%
+\psi\left(  s\right)  $, $\mathfrak{c}$ constant, satisfies $Q_{H}\left(
w\right)  \leq0$ in $U_{\varepsilon}=\left\{  z\in U;d(z)<\varepsilon\right\}
$, where
\[
\varepsilon=\min\left\{  R,\frac{1}{\mu+nH\left(  1+\lambda\right)  }\left(
\frac{\delta-\alpha}{\delta}\right)  \right\}
\]
and, moreover, from (\ref{ee}),
\[
\psi\left(  \varepsilon\right)  \geq h+\frac{H\Re^{2}}{1+\sqrt{1-H^{2}\Re^{2}%
}}.
\]
As
\[
U_{\varepsilon}=\left\{
\begin{array}
[c]{c}%
U_{\Gamma}^{\epsilon}:=\left\{  z\in U_{\Gamma};d(z,\Gamma)<\varepsilon
\right\}  \text{ if }U=U_{\Gamma}\\
U_{\Gamma_{i}}^{\epsilon}:=\left\{  z\in U_{\Gamma_{i}};d(z,\Gamma
_{i})<\varepsilon\right\}  \text{ if }U=U_{\Gamma_{i}}%
\end{array}
\right.  ,
\]
we can consider the supersolutions $w_{\Gamma}\in C^{2}\left(  \overline
{U}_{\Gamma}^{\varepsilon}\right)  $ and $w_{\Gamma_{i},t}\in C^{2}\left(
\overline{U}_{\Gamma_{i}}^{\varepsilon}\right)  $, $t\in\left[  0,1\right]  $,
relatively to the operator $Q_{tH}$, given by $w_{\Gamma}\left(  z\right)
=\left(  \psi\circ d\right)  \left(  z\right)  $ and $w_{\Gamma_{i},t}\left(
z\right)  =th+\left(  \psi\circ d\right)  \left(  z\right)  $ respectively,
where $\psi\circ d$ is given by (\ref{npsi2}). It follows that $w_{\Gamma}=0$
in $\Gamma=\partial U_{\Gamma}^{\epsilon}\cap\partial\Omega$ and
\[
w_{\Gamma}\left(  \varepsilon\right)  \geq h+\frac{H\Re^{2}}{1+\sqrt
{1-H^{2}\Re^{2}}}%
\]
in $\partial U_{\Gamma}^{\epsilon}\backslash\Gamma$. Moreover, $w_{\Gamma
_{i},t}=th$ in $\Gamma_{i}=\partial U_{\Gamma_{i}}^{\epsilon}\cap
\partial\Omega$ and
\[
w_{\Gamma_{i},t}\left(  \varepsilon\right)  \geq h(1+t)+\frac{H\Re^{2}%
}{1+\sqrt{1-H^{2}\Re^{2}}}%
\]
in $\left(  \partial U_{\Gamma_{i}}^{\epsilon}\cap\overline{\Omega}\right)
\backslash\Gamma_{i}$. Let $\varphi$ as in (\ref{nphi}). From Lemma
\ref{height} and from the Maximum Principle, it follows that\emph{ }%
$w_{\Gamma}$ and\emph{ }$w_{\Gamma_{i},t}$ are upper local barriers relatively
to the boundary data $t\varphi\in C^{\infty}\left(  \partial\Omega\right)  $.
From the Theorem \ref{minimal}, since $\sigma\leq\tau$, there is $u\in
C^{2,\alpha}\left(  \overline{\Omega}\right)  $ satisfying $Q_{0}\left(
u\right)  =0$, $u|_{\Gamma}=0$, $u|_{\Gamma_{i}}=h$, $i=1,...,m$. As vertical
translation in $M\times\mathbb{R}$ are isometries, we use vertical
translations of $graf(u)$ to obtain lower barrier relatively to the boundary
data $t\varphi$.

Now, set%
\[
V=\{t\in\lbrack0,1];\exists u_{t}\in C^{2,\alpha}\left(  \overline{\Omega
}\right)  \text{ satisfying }Q_{tH}\left(  u_{t}\right)  =0,\text{ }%
u_{t}|_{\partial\Omega}=t\varphi\}\text{.}%
\]
We have $V\neq\varnothing$ since $t=0\in V$. Moreover, $V$ is open by the
Implicit Function Theorem in Banach spaces. From the barriers above, we obtain
a priori uniform $C^{1}$ estimates for the family of Dirichlet problems
$Q_{0}\left(  u_{t}\right)  =0,$ $u_{t}|_{\partial\Omega}=t\varphi$, which
give us that $V$ is closed (by similar sequence of arguments exposed in the
last paragraph of proof of Theorem \ref{minimal}). The uniqueness of the
solution is a consequence of the Maximum Principle for the difference of two solutions.
\end{proof}

\bigskip

* Ari J. Aiolfi: Dep. de Matem\'{a}tica - Universidade Federal de Santa Maria
, Santa Maria RS/Brazil (ari.aiolfi@mail.ufsm.br);

* Giovanni S. Nunes \& Lisandra O. Sauer: IFM - Universidade Federal de
Pelotas, Pelotas RS/Brazil \newline(giovanni.nunes@ufpel.edu.br, lisandra.sauer@ufpel.edu.br);

* Rodrigo B. Soares: IMEF - Universidade Federal de Rio Grande, Rio Grande
RS/Brazil (rodrigosoares@furg.br)

\end{document}